\newtheorem{theorem}{Theorem}[section]
\newtheorem{proposition}[theorem]{Proposition}
\newtheorem{remark}[theorem]{Remark}
\begin{document}

   \title{Symmetric quandle colorings and ribbon concordance}
   
   \author{Nicholas Cazet}

   \begin{abstract}

A quandle can always trivially color an orientable surface-link. This note shows that the surface-link $10_1^{-1,-1}$ of Yoshikawa's table cannot be colored by a symmetric dihedral quandle of order 4, and explains how this obstructs a generalized ribbon concordance between another link of two projective planes $8_1^{-1,-1}$ that does admit a coloring by the same symmetric dihedral quandle.

 \end{abstract}

\maketitle

Section \ref{pre} gives preliminary definitions, Section \ref{ch} describes how to color ch-diagrams with symmetric quandles, and Section \ref{obstruct}  uses symmetric quandle coloring to obstruct the existence of a ribbon concordance between the surface-links $8_1^{-1,-1}$ and $10_1^{-1,-1}$ from Yoshikawa's table  \cite{ch} and discusses how the symmetric 3-cocycle invariant can make a similar obstruction.
\begin{figure}

\includegraphics[scale=.65]{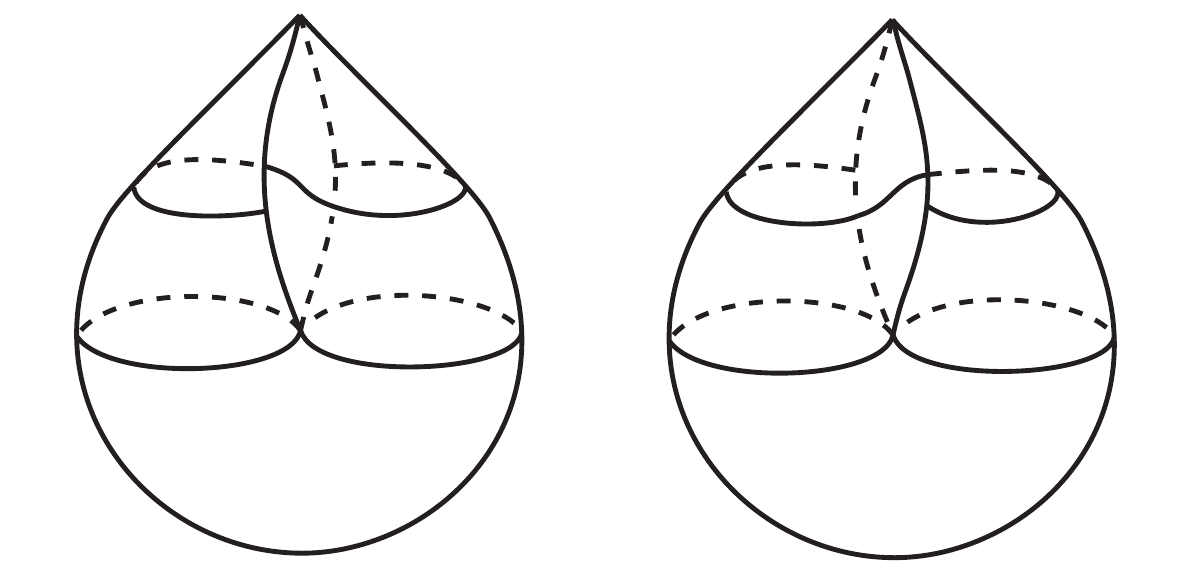}

\caption{Cross-cap broken sheet diagrams of the 2 unknotted projective planes.}

\label{fig:cross}
\end{figure}

\section{preliminaries} 
\label{pre}

A {\it surface-link} is a closed surface smoothly embedded in $\mathbb{R}^4$ up to ambient isotopy. A {\it surface-knot} is a connected surface-link. A {\it 2-knot} is a surface-knot diffeomorphic to $S^2$. A surface-knot is {\it unknotted} if it bounds a handlebody or is the connected sum of some number of unknotted projective planes. Broken sheet diagrams of the two unknotted projective planes, distinguished by their self-intersection numbers or {\it normal Euler numbers}, are shown in Figure \ref{fig:cross}. 

%
%
%
%
%
%
%
%
%
%
%
%

\begin{figure}

\includegraphics[scale=1]{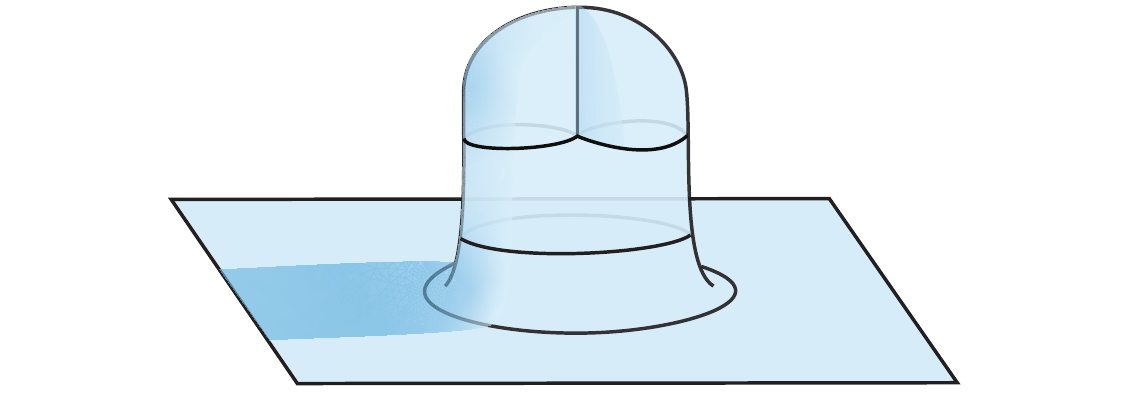}

\caption{Connected sum with an unknotted projective plane $G'\# P$.}

\label{fig:notcolor}
\end{figure}

Let $F_0$ and $F_1$ be surface-links with components of the same genus and orientability. If there is a concordance
$C$ $(\cong F_0\times [0,1]\cong F_1\times [0,1])$ in $S^4\times[0,1]$ between $F_1\subset S^4\times\{1\}$ and $F_0\subset S^4\times\{0\}$ such that the restriction to $C$ of the projection $S^4\times[0,1]\to[0,1]$ is a Morse function with critical points of index 0 and 1 only, then {\it $F_1$ is ribbon concordant to $F_0$ } denoted \[ F_1\geq F_0.\]

\noindent If $F_1 \geq F_0$, then there is a set of $n$ 1-handles on a split union of $F_0$ and $n$ trivial 2-knots such that $F_1$ is obtained by surgeries along these handles. Gordon originally introduced ribbon concordance between classical knots in \cite{gordonribbon}.

  \section{Coloring ch-diagrams}
  \label{ch}

A {\it singular link diagram} is an immersed link diagram in the plane with crossings and traverse double points called {\it vertices}. At each vertex assign a {\it marker}, a local choice of two non-adjacent regions in the complement of the vertex. Such a marked singular link diagram is called a {\it ch-diagram} \cites{CKS,kamada2017surface, kamadakim,ch}. One of the two smoothings of a vertex connects the two regions of the marker, the positive resolution $L^+$, and one separates the marker's regions, the negative resolution $L^-$, see Figure \ref{fig:smoothing}.   If $L^-$ and $L^+$ are unlinks, then the ch-diagram is said to be {\it admissible}. Admissible ch-diagrams represent surface-links and induce broken sheet diagrams and every surface-link defines an admissible ch-diagram.

\begin{figure}

\begin{overpic}[unit=.46mm,scale=.6]{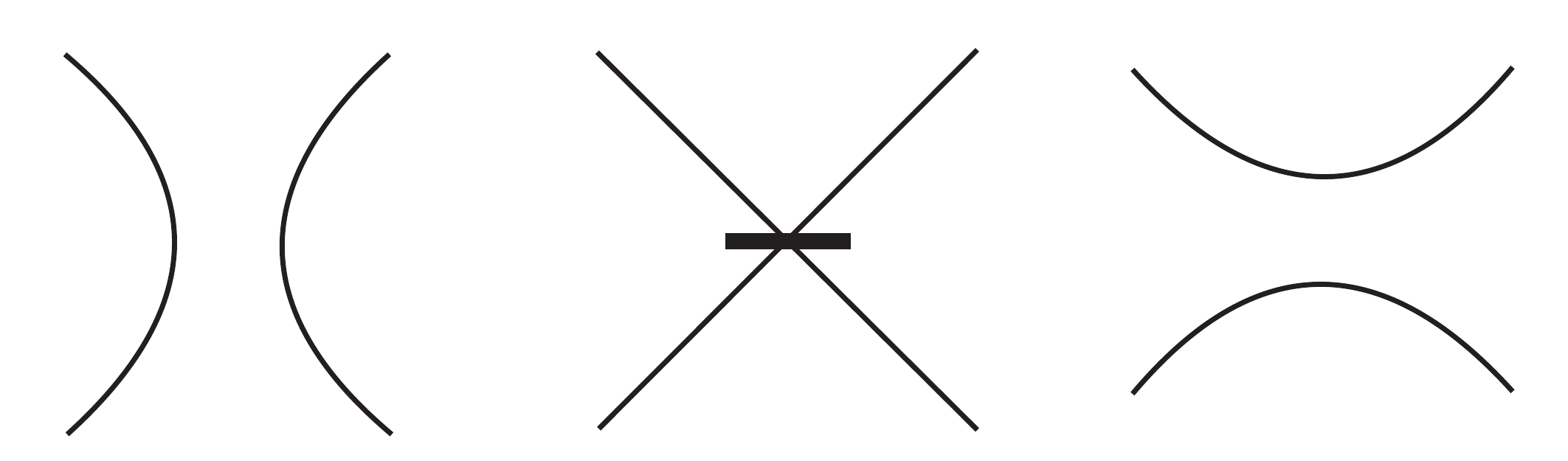}\put(37,-2){$L^-$}\put(233,-2){$L^+$}

\end{overpic}

\caption{Smoothings of a marked vertex.}
\label{fig:smoothing}
\end{figure}

Including the 3 Reidemeister moves of classical link diagrams, there are 8 moves on ch-diagrams called {\it Yoshikawa moves} \cites{ch,kamada2017surface}. Two admissible ch-diagrams represent equivalent surface-links if and only if they are Yoshikawa move equivalent. The ch-index of a ch-diagram is the number of marked vertices plus the number of crossings. The {\it ch-index} of a surface-link $F$, denoted ch($F$), is the minimum of ch-indices ch$(D)$ among all admissible ch-diagrams $D$ representing $F$. A sufrace-link $F$ is said to be {\it weakly prime} if $F$ is not the connected sum of any two surfaces $F_1$ and $F_2$ such that ch$(F_i)$ $<$ ch$(F)$. Yoshikawa classified weakly prime surface-links whose ch-index is 10 or less in \cite{ch}. He generated a table of their representative ch-diagrams up to orientation and mirror. His notation is of the form $I_k^{g_1,\dots,g_c}$ where $I$ is the surface-link's ch-index and $|g_1|,\dots,|g_c|$ are the genera of its components with $g_i<0$ implying the component is non-orientable.

\begin{figure}
\begin{overpic}[unit=.35mm,scale=.8]{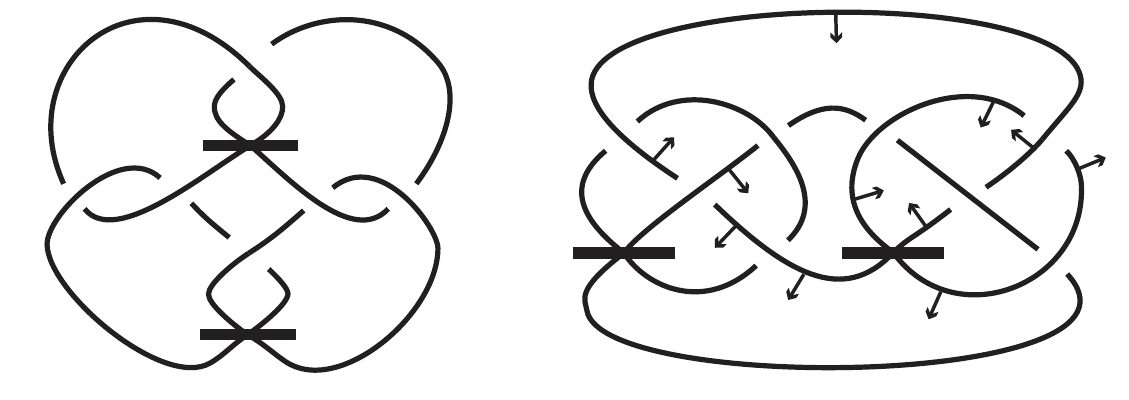}\put(203,81){$b$}\put(200,22){$a$}\put(164,56){$c$}\put(248,58){*}\put(150,43){*}
\end{overpic}

\caption{Ch-diagrams of $8_1^{-1,-1}$ and $10_1^{-1,-1}$.}

\label{fig:notcolors}
\end{figure}

  There is a translation of Reidemeister moves in a motion picture to sheets in a broken sheet diagram. Associate the time parameter of a Reidemeister move with the height of a local broken sheet diagram.  A Reidemeister III move gives a triple point diagram, a Reidemeister I move corresponds to a branch point, and a Reidemeister II move corresponds to a maximum or minimum of a double point curve, see \cites{kamadakim}. Triple points of the induced broken sheet diagram are in correspondence with the Reidemeister III moves in the motion picture. Include sheets containing saddles for each saddle point in the motion picture to generate a broken sheet diagram of the entire surface-link.

 \begin{figure}

\begin{overpic}[unit=.5mm,scale=.6]{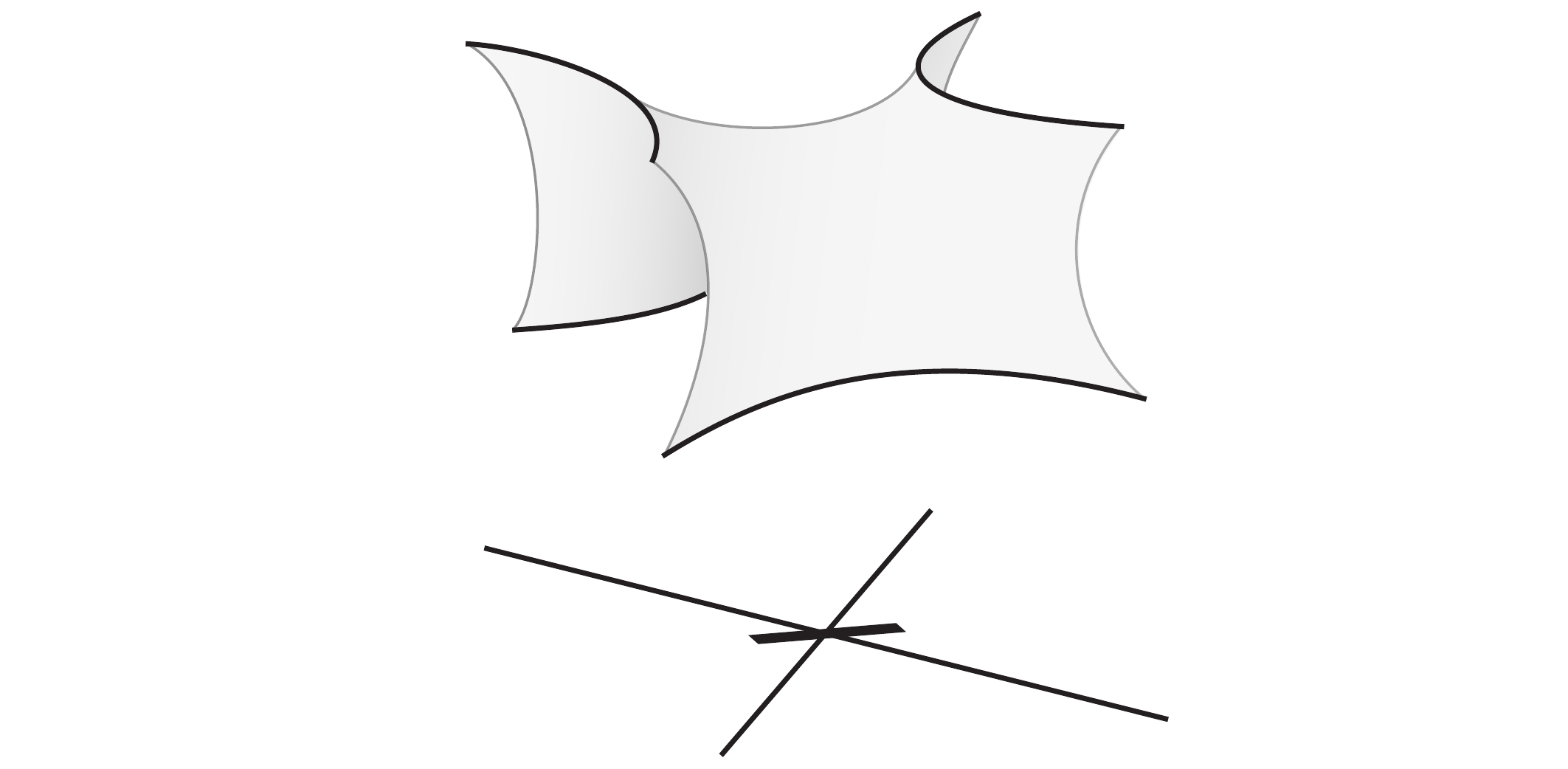}

\end{overpic}

\caption{Induced saddle sheet.}
\label{fig:saddle}
\end{figure}

Consider an admissible ch-diagram. There are finite sequences of Reidemeister moves that take $L^-$ and $L^+$ to crossing-less diagrams $O^-$ and $O^+$. Translate these Reidemeister moves to a broken sheet diagram.  In between the still of $L^-$ and $L^+$ and for each marked vertex include sheets containing the saddles traced by transitioning from $L^-$ to  $L^+$ in the local picture of Figure \ref{fig:smoothing}. These saddles are pictured in Figure \ref{fig:saddle}. Finally,  cap-off $O^-$ and $O^+$ with embedded disks to produce a broken sheet diagram.

An $(X,\rho)$-coloring of a ch-diagram is a $(X,\rho)$-coloring of the link diagram such that the arcs meeting at a vertex are given the same color and their two orientations related by a basic inversion are shown in Figure \ref{fig:orientationvertex}. There is a one-to-one correspondence between colorings of ch-diagram and the colorings of the surface-link it represents. The Reidemeister moves in the trivialization of $L^-$ and $L^+$ uniquely extend the ch-diagram's coloring to the induced broken sheet diagram. Kamada, Kim, and Leem give a detailed account of coloring ch-diagrams and calculating their symmetric quandle cocycle invariant in \cite{kamadakim}.

 \begin{figure}
\centering
\includegraphics[scale=.45]{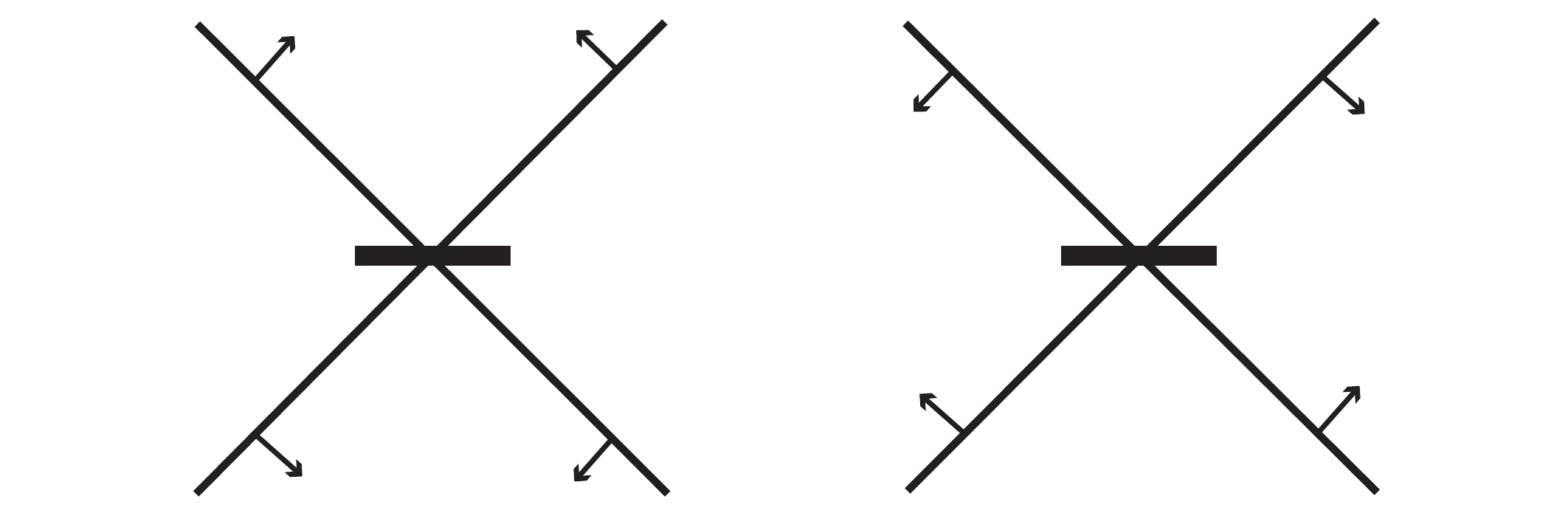}

\caption{Possible orientations at a marked vertex.}
\label{fig:orientationvertex}
  \end{figure}

\section{Symmetric quandles and ribbon concordance}
\label{obstruct}

 Let $D_0$ and $D_1$ be broken sheet diagrams of the surface-knots $F_0$ and $F_1$. Let $\cup_{i=1}^n S_i$ be a disjoint union of broken sheet diagrams each representing an unknotted surface-knot. Suppose that $D_1$ is obtained from $D_0\cup (\cup_{i=1}^n S_i)$ by surgery along sufficiently thin 1-handles $\cup_{j=1}^m h_j$ that intersect $D_0\cup (\cup_{i=1}^n S_i)$ along meridian 2-disks of $h_j$. Then, \[ F_1\succ F_0.\] 

\noindent Figure \ref{fig:general} shows an example. When each $S_i$ is an embedded sphere and each 1-handle $h_j$ attaches $D_0$ to $S_j$ with $n=m$, the relation $\succ$ is ribbon concordance $\geq$. 

\begin{figure}
\centering
\begin{overpic}[unit=.35mm,scale=.9]{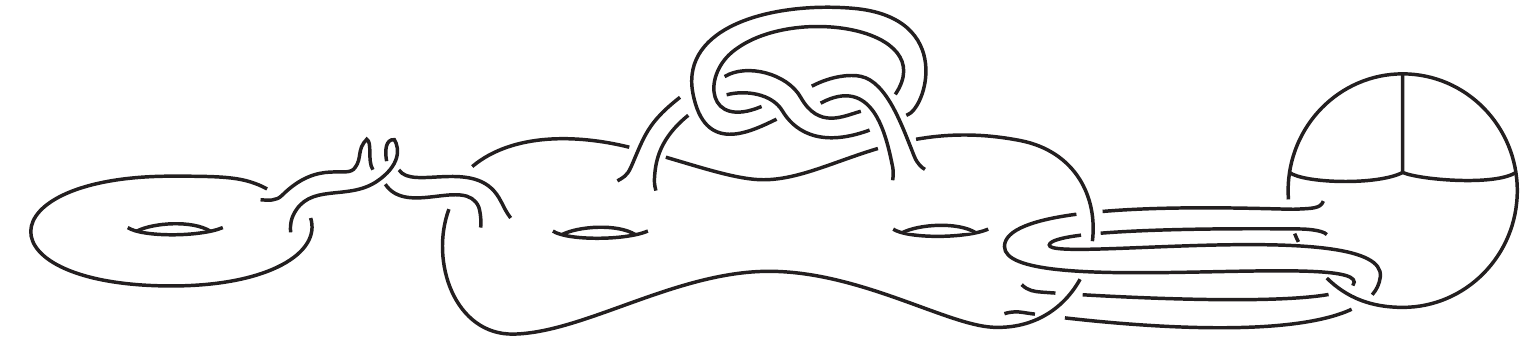}\put(135,12){$D_0$}\put(94,30){$h_1$}\put(28,19){$S_1$}\put(365,29){$S_2$}\put(245,73){$h_2$}\put(305,40){$h_3$}
\end{overpic}
\caption{Handle surgery on $D_0 \cup (\cup_{i=1}^2 S_i)$.}
\label{fig:general}
\end{figure}

\begin{proposition}
For any symmetric quandle $(X,\rho)$ and $(X,\rho)$-set $Y$, if $F_1\succ F_0$ then $F_1$ is $(X,\rho)_Y$-colorable only if $F_0$ is $(X,\rho)_Y$-colorable.
\label{prop:color}
\end{proposition}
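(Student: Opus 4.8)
The plan is to establish the implication directly, by restriction: I will show that any $(X,\rho)_Y$-coloring of a diagram $D_1$ descends to an $(X,\rho)_Y$-coloring of $D_0$ by reversing the handle surgeries that produced $D_1$. Since the relation $F_1\succ F_0$ is witnessed by the concrete data of the statement --- a diagram $D_1$ obtained from $D_0\cup(\cup_{i=1}^n S_i)$ by surgery along sufficiently thin $1$-handles $\cup_{j=1}^m h_j$ meeting the diagram in meridian $2$-disks --- I would work entirely at the level of broken sheet diagrams, invoking the one-to-one correspondence between colorings of a diagram and colorings of the surface-link it represents.

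First I would fix an $(X,\rho)_Y$-coloring $\mathcal{C}$ of $D_1$ and reverse each surgery locally. In the broken sheet diagram each handle $h_j$ contributes a thin tube; cutting this tube along the core circle where $h_j$ met $D_0\cup(\cup_{i=1}^n S_i)$ and capping the two resulting boundary circles with small embedded disks returns the diagram to $D_0\cup(\cup_{i=1}^n S_i)$, as in the example of Figure \ref{fig:general}. Because the handles are sufficiently thin, these capping disks carry no double points, and each boundary circle is a meridian of a single sheet, so it inherits a well-defined element $x\in X$ from $\mathcal{C}$. I would then color each capping disk by that $x$, leaving the colors of all sheets of $D_0$ and of the $S_i$ exactly as in $\mathcal{C}$. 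The double point curves of $D_0\cup(\cup_{i=1}^n S_i)$ are precisely the double point curves of $D_1$ that survive the cut, so the quandle relations and the $\rho$-compatibility conditions at them hold automatically, being inherited from $\mathcal{C}$; the basic-inversion bookkeeping of Figure \ref{fig:orientationvertex} keeps this consistent even across the non-orientable pieces.

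The heart of the argument, and the step I expect to demand the most care, is checking that the inserted caps are compatible with the $(X,\rho)$-set structure on $Y$. Capping a tube fills a hole and can split a complementary region of the diagram in two, so I must verify that the $Y$-colors of the two resulting regions still obey the action relation across the new capping sheet colored $x$. This is exactly where the thinness hypothesis and the meridian-disk intersection condition are essential: in $\mathcal{C}$ the two regions adjacent to a cap were joined through the interior of the tube, so their $Y$-colors were already related by the same $x$ that now labels the cap, and no new constraint is introduced. Granting this compatibility --- which one can read off from the local model of the tube and its cap, following the coloring conventions of \cite{kamadakim} --- the coloring $\mathcal{C}$ restricts to a genuine $(X,\rho)_Y$-coloring of the disjoint diagram $D_0\cup(\cup_{i=1}^n S_i)$.

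Finally, since $D_0\cup(\cup_{i=1}^n S_i)$ is a disjoint union and every coloring condition is local to a single connected diagram, I would simply discard the colors on the $S_i$ summands and retain those on the $D_0$ summand, producing an $(X,\rho)_Y$-coloring of $D_0$ and hence of $F_0$. I note that the unknottedness of the $S_i$ plays no role in this direction, as the auxiliary pieces are colored \emph{for free} by the restriction of $\mathcal{C}$; that hypothesis is needed only to run the construction in reverse, where one must color the $S_i$ and the handles in order to extend a coloring of $F_0$ to $F_1$.
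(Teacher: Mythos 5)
Your proposal is correct and follows essentially the same route as the paper's proof: restrict the $(X,\rho)_Y$-coloring of $D_1$ to the punctured diagram $cl(D_0-\cup_{j=1}^m h_j)$ and extend it uniquely over the capping disks, the thinness and meridian-disk hypotheses guaranteeing both the well-defined sheet colors and the compatibility of the $Y$-colors across the caps. You simply spell out in detail (including the region-color check and the dispensability of the $S_i$ in this direction) what the paper compresses into three sentences.
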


\begin{proof}
Suppose that $D_1$ is obtained from $D_0\cup \left(\cup_{i=1}^n S_i\right)$ by surgeries along 1-handles $\cup_{j=1}^m h_j$ that intersect $D_0$ in meridional disks. A $(X,\rho)_Y$-coloring of $D_1$ restricts to a $(X,\rho)_X$-coloring of the punctured diagram $cl(D_0 -\cup_{j=1}^m h_j)$. The coloring of the punctured diagram uniquely extends to a $(X,\rho)_X$-coloring of $D_0$.
\end{proof}

\begin{theorem}

$8_1^{-1,-1}\nsucc10_1^{-1,-1}$

\end{theorem}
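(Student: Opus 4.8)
The plan is to invoke the contrapositive of Proposition~\ref{prop:color}. Setting $F_1 = 8_1^{-1,-1}$ and $F_0 = 10_1^{-1,-1}$, the proposition asserts that $F_1 \succ F_0$ forces $F_0$ to be $(X,\rho)_Y$-colorable whenever $F_1$ is. Hence it suffices to exhibit a single symmetric quandle $(X,\rho)$ together with an $(X,\rho)$-set $Y$ for which $8_1^{-1,-1}$ admits a coloring while $10_1^{-1,-1}$ does not; the existence of such a pair immediately yields $8_1^{-1,-1}\nsucc 10_1^{-1,-1}$.

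For the coloring target I would take $(X,\rho)$ to be the symmetric dihedral quandle of order $4$, that is $X = R_4 = \mathbb{Z}/4$ with $a * b = 2b - a$ equipped with a \emph{nontrivial} good involution (the natural candidate being $\rho(x) = x+2$, which is fixed-point-free), and choose $Y$ to be a convenient $(X,\rho)$-set, for instance $X$ itself or a small set distinguishing the base regions marked by $*$ in Figure~\ref{fig:notcolors}. The point of a fixed-point-free $\rho$ is that colorability is a genuine constraint: on a non-orientable component the arc orientation reverses through a cross-cap (Figure~\ref{fig:cross}), so a constant coloring by $x$ would require $\rho(x)=x$, which fails. Using the rules of Section~\ref{ch}, a coloring assigns an element of $X$ to each arc of a ch-diagram subject to the dihedral relation at every classical crossing and to the marked-vertex condition of Figure~\ref{fig:orientationvertex}, where the two arcs carry the same color with orientations related by the basic inversion, while the complementary regions are labeled compatibly by $Y$.

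First I would verify that $8_1^{-1,-1}$ is colorable by writing down an explicit assignment to the arcs $a$, $b$, $c$ of its ch-diagram in Figure~\ref{fig:notcolors} and checking the crossing and marked-vertex relations directly; since the abstract already records that this link is colorable by the order-$4$ symmetric dihedral quandle, I expect a short nonconstant solution to exist. The substantive step is the non-colorability of $10_1^{-1,-1}$: I would assemble the full system of coloring equations from its ch-diagram, propagate the relation $a * b = 2b - a$ through each crossing, and impose the good-involution constraint that $\rho$ forces at the marked vertices on account of the diagram's non-orientability. Tracking these relations around the diagram should over-determine the arc colors, and I would argue that every solution of the crossing equations is incompatible with the vertex/involution constraints, so that no valid $(X,\rho)_Y$-coloring survives.

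The hard part will be this last incompatibility argument. The delicate mechanism is that the normal Euler number of the non-orientable components interacts with $\rho$ to impose a parity-type obstruction in $R_4$ that $10_1^{-1,-1}$ cannot meet even though the closely related link $8_1^{-1,-1}$ escapes it. Concretely, I must be careful to enumerate the coloring equations for the correct representative ch-diagram from Yoshikawa's table, to handle the connected-sum picture $G'\# P$ of Figure~\ref{fig:notcolor} correctly, and to confirm that no admissible (in particular no surviving constant) coloring remains once the involution is fixed-point-free. Once the system is shown to have no solution, the theorem follows by the contrapositive of Proposition~\ref{prop:color} as above.
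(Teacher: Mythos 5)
Your proposal matches the paper's proof in essentially every strategic respect: the paper likewise applies the contrapositive of Proposition~\ref{prop:color} with the symmetric dihedral quandle $(R_4,\rho)$ where $\rho(x)=x+2$ (written there as $\rho(0)=2$, $\rho(1)=3$), cites Kamada--Oshiro rather than recomputing the coloring of $8_1^{-1,-1}$, and rules out colorings of $10_1^{-1,-1}$ by exactly the parity obstruction you anticipate. The only difference is execution: where you propose assembling the full coloring system, the paper extracts just the two starred crossings of Figure~\ref{fig:notcolors}, giving $a^b=\rho(a)$ and $b^c=\rho(a)$, so that $2b=a+\rho(a)$ forces $a$ and $b$ to differ in parity while $2c=b+\rho(a)$ forces $b+\rho(a)$ to be even --- a contradiction since the fixed-point-free $\rho$ preserves parity.
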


\begin{proof}

The symmetric quandle $(R_4,\rho)$ where $\rho(0)=2$ and $\rho(1)=3$ colors $8_1^{-1,-1}$. Kamada and Oshiro used a non-trivial $(R_4,\rho)$-coloring and a symmetric quandle cocycle to calculate the triple point number of $8_1^{-1,-1}$ and its generalizations \cite{sym}.  It will be shown that $(R_4,\rho)$ does not color $10_1^{-1,-1}$, the result then follows from Proposition \ref{prop:color}.

Note the arbitrary colors $a,b,c\in R_4$ and orientations given to $10_1^{-1,-1}$ in Figure \ref{fig:notcolors}. Two relations can be read from the starred crossings, $a^b=\rho(a)$ and $b^c=\rho(a)$. Using the definition of the quandle operation for $R_4$, $a^b=2b-a=\rho(a)$ and $b^c=2c-b=\rho(a)$. The first relation is equivalent to $2b=a+\rho(a)$ which is either 0 or 2. If $a=0$ or $a=2$, then $a+\rho(a)=2$ and $b$ must be odd. If $a=1$ or $a=3$, then $a+\rho(a)=0$ and $b$ must be even. Therefore, the colors $a,b\in R_4$ cannot both be even or odd.

The second relation is equivalent to $2c=b+\rho(a)$. The involution $\rho$ does not change the parity of the color. Therefore, $b+\rho(a)$ must be odd since $a$ and $b$ differ in parity. This contradicts the evenness of $b+\rho(a)$ from $2c=b+\rho(a)$. Thus, no such $a,b,c\in R_4$ exist and $(R_4,\rho)$ cannot color $10_1^{-1,-1}$.

\end{proof}

The symmetric quandle 3-cocycle invariant can also be used to obstruct ribbon concordance. For every symmetric quandle $(X,\rho)$ and $(X,\rho)$-set $Y$ there is an associated chain and cochain complex. {\it Symmetric quandle 3-cocycles} are cocycles of this cochain complex and represent cohomology classes of $H^3_{Q,\rho}(X;A)_Y$ for any abelian group $A$,  see  \cites{carter2009symmetric, kamada2017surface, sym}.

 Let $C$ be an $(X,\rho)_Y$-coloring of a broken sheet diagram $D$. For a triple point $t$ of $D$, choose 1 of the 8 3-dimensional complementary regions around $t$ and call the region  {\it specified}. There are 12 semi-sheets around a triple point. Let $S_B$, $S_M$, and $S_T$ be the 3 of them that face the specified region, where $S_B$, $S_M$, and $S_T$ are semi-sheets of the bottom, middle, and top sheet respectively. Let $n_B$, $n_M$, and $n_T$ be the normal orientations of $S_B$, $S_M$, and $S_T$. Through basic inversions, it is assumed that each normal orientation points away from the specified region. Let $x_1,x_2,$ and $x_3$ be the elements of $X$ assigned to the semi-sheets $S_B$, $S_M$, and $S_T$ whose normal orientations $n_B$, $n_M$, and $n_T$ point away from the specified region. Additionally, let $y$ be the color of the specified region. The {\it color} of the triple point $t$ is  $C_t=(y,x_1,x_2,x_3).$ 
 
\begin{figure}
\centering
\begin{overpic}[unit=.35mm,scale=.35]{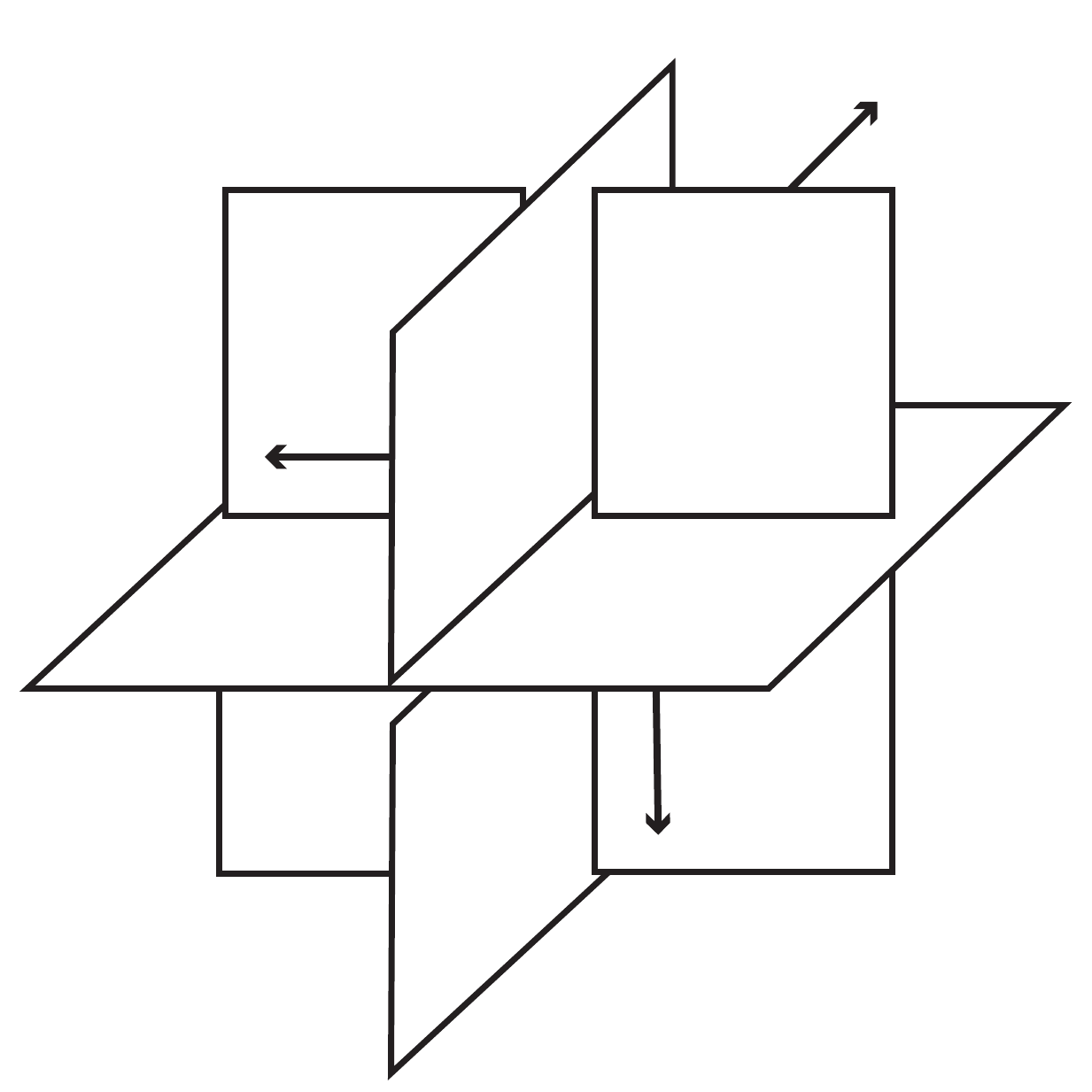}\put(82,82.5){$x_1$}  \put(55,77){$x_2$}  
\put(73,55){$x_3$}\put(65,58){*}
\end{overpic}
\caption{A positive, colored triple point.}
\label{fig:spec}
\end{figure}

 Let $\phi:\mathbb{Z}(Y\times X^3)\to A$ be a symmetric quandle 3-cocycle of $(X,\rho)_Y$ with coefficients in $A$. For a $(X,\rho)_Y$-coloring $C$, the ${\it \phi-weight}$ of the triple point $t$ is defined by $\varepsilon \phi(y,x_1,x_2,x_3)$ such that $\varepsilon$ is +1 (or -1) if the triple of the normal orientations $(n_T,n_M,n_B)$ is (or is not) coherent with the orientation of $S^3\subset\mathbb{R}^4$ at the triple point and is denoted $W_\phi(t;C)$. The triple point of Figure \ref{fig:spec} is positive. The ${\it \phi-weight}$ of a diagram $D$ with respect to a symmetric quandle coloring $C$ is \[ W_\phi(D;C)=\sum_\tau W_\phi(t;C) \in A,\] where $t$ runs over all triple points of $D$. The value $W_\phi(D;C)$ is an invariant of an $(X,\rho)_Y$-colored surface-link $(F,C)$ \cites{sym,oshiro}. Denote $W_\phi(D;C)$ by $W_\phi(F;C)$.

 The {\it symmetric quandle cocycle invariant} of a broken sheet diagram $D$ representing a surface-link $F$ is the multi-set \[ \Phi_\phi(D)=\{W_\phi(D;C) : C\in \text{Col}_{(X,\rho)_X}(D)\}.\] The multi-set $\Phi_\phi(D)$ is an invariant of the $(X,\rho)_Y$-colored surface-link $F$ \cite{sym}.  Since the invariant is independent of broken sheet representative, denote $\Phi_\phi(D)$ by $\Phi_\phi(F)$.

\begin{theorem}

If $F_1\succ F_0$, then $\Phi_\phi(F_1)\stackrel{m}\subset \Phi_\phi(F_0)$ for any symmetric quandle 3-cocycle $\phi$.

\end{theorem}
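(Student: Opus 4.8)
The plan is to upgrade the restriction map built in the proof of Proposition \ref{prop:color} into a \emph{weight-preserving injection} of coloring sets, and then read off the multi-set inclusion directly. Write $D_1$ as obtained from $D_0\cup(\cup_{i=1}^n S_i)$ by surgery along sufficiently thin $1$-handles $\cup_{j=1}^m h_j$ meeting the diagram in meridian disks, and define
\[ r:\mathrm{Col}_{(X,\rho)_X}(D_1)\longrightarrow \mathrm{Col}_{(X,\rho)_X}(D_0) \]
by sending a coloring $C$ of $D_1$ to the unique coloring $r(C)$ of $D_0$ extending the restriction of $C$ to the punctured diagram $cl(D_0-\cup_{j=1}^{m}h_j)$. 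Proposition \ref{prop:color} already supplies the well-definedness of $r$. The two things I would then establish are that $r$ is injective and that $W_\phi(D_1;C)=W_\phi\bigl(D_0;r(C)\bigr)$ for every $C$; together these force $\Phi_\phi(F_1)\stackrel{m}{\subset}\Phi_\phi(F_0)$.

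First I would verify injectivity. Since $F_1$ is a connected surface-knot, the spheres $\cup_{i=1}^n S_i$ and handles $\cup_{j=1}^m h_j$ are attached so that $D_1$ is connected; each $S_i$ is unknotted and hence appears as a single colored sheet, and each thin handle $h_j$ is a single tube whose color is forced by the color of the meridian disk at its foot. Propagating along the connected handle structure, the colors that $C$ assigns to the spheres and tubes are completely determined by the colors on the sheets of $D_0$, i.e.\ by $r(C)$. Thus $C$ is recovered from $r(C)$, so $r$ is injective.

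Next I would match $\phi$-weights termwise. The spheres $S_i$ are unknotted and admit broken sheet diagrams with no triple points, and the thin handles $h_j$, attached along meridian disks, can be isotoped into regular neighborhoods of arcs that meet the existing sheets only in the prescribed disks; running each tube parallel to $D_0$ produces only double-point curves and no triple points. Hence the triple points of $D_1$ are exactly those of $D_0$, and since $C$ and $r(C)$ agree on the sheets of $D_0$ away from the surgery disks, corresponding triple points carry the same color and the same sign. Therefore
\[ W_\phi(D_1;C)=W_\phi\bigl(D_0;r(C)\bigr). \]
Combining this with injectivity, the injection $r$ carries the multi-set $\Phi_\phi(F_1)=\{W_\phi(D_1;C)\}_C$ bijectively and value-by-value onto the sub-multi-set of $\Phi_\phi(F_0)$ indexed by $r(\mathrm{Col}_{(X,\rho)_X}(D_1))\subseteq \mathrm{Col}_{(X,\rho)_X}(D_0)$, which is precisely the claim $\Phi_\phi(F_1)\stackrel{m}{\subset}\Phi_\phi(F_0)$.

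I expect the main obstacle to be the triple-point bookkeeping for the handles: one must be certain that attaching the $1$-handles along meridian disks can genuinely be realized without creating new triple points, so that the triple-point correspondence between $D_1$ and $D_0$ is exact. I would handle this with the thin-tube isotopy described above, reducing every handle to a collection of double-point curves; the injectivity half is then routine given the connectivity of $F_1$ and the unknottedness of the $S_i$.
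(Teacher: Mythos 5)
Your proposal follows essentially the same route as the paper's proof: restrict a coloring of $D_1$ to the punctured copy of $D_0$, extend it uniquely to a coloring of $D_0$, observe that the triple points of $D_1$ coincide with those of $D_0$ (the thin handles and the unknotted pieces $S_i$ contributing none), and match the $\phi$-weights $W_\phi(t;C)$ termwise. The only real difference is that you make explicit the injectivity of the restriction map $r$, which the paper leaves implicit even though it is what justifies the multi-set (rather than set) inclusion, so your write-up is if anything slightly more complete.
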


\begin{proof}

For any element $a\in \Phi_\phi(F_1)$, there is an $(X,\rho)_Y$-coloring $C_1\in Col_{(X,\rho)}(D_1)$ with $a=W_\phi(C_1)=\sum_t W_\phi(t;C_1)$ on $D_1$. Since the intersection of $D_0$ and each 1-handle consists of small 2-disk, the $(X,\rho)_Y$-coloring $C_1$ restricted to the punctured diagram $D_0-(\cup_{j=1}^n h_j)$ determines the $(X,\rho)$-coloring of $D_0$ uniquely, $C_0\in \text{Col}_{(X,\rho)}(D_0)$. Since the set of triple points of $D_1$ is coincident with that of $D_0$, and since $W_\phi(t;C_0)=W_\phi(t;C_1)$ for any triple point $t$, $a=\sum_t W_\phi(t;C_0)=W_\phi(D_0;C_0)\in\Phi_\phi(D_0).$

\end{proof}

The Kinoshita conjecture posits that all knotted projective planes have a trivial projective summand.

\begin{remark}

If the Kinoshita conjecture is true, then a symmetric quandle whose good involution has no fixed points cannot color a projective plane.

\end{remark}

\begin{proof}
The cross-cap diagrams of the unkotted projective planes can only be monochromatically colored with a fixed point of the good involution.
\end{proof}

  \section*{Acknowledgements}

  I would like to thank Maggie Miller for helpful discussion and guidance.

        \bibliographystyle{amsplain}
\bibliography{proposal.bib}

\end{document}